\newtheorem{precor}{{\bf Corollary}}
\newtheorem{precon}{{\bf Conjecture}}
\newtheorem{predefin}{{\bf Definition}}
\newenvironment{defin}[1]{\begin{predefin}{\hspace{-0.5
                   em}{\bf.\ }}{\rm
#1}\hfill{$\spadesuit$}}{\end{predefin}}
\newtheorem{preexm}{{\bf Example}}
\newtheorem{prerem}{{\bf Remark}}
\newtheorem{preappl}{{\bf Application}}
\newtheorem{prelem}{{\bf Lemma}}
\newtheorem{preproof}{{\bf Proof.\ }}
\newenvironment{proof}[1]{\begin{preproof}{\rm
               #1}\hfill{$\blacksquare$}}{\end{preproof}}
\newtheorem{presproof}{{\bf Sketch of Proof.\ }}
\newtheorem{prethm}{{\bf Theorem}}
\newenvironment{thm}{\begin{prethm}{\hspace{-0.5
               em}{\bf.\ }}}{\end{prethm}}
\newtheorem{prealphthm}{{\bf Theorem}}
\newenvironment{alphthm}{\begin{prealphthm}{\hspace{-0.5
               em}{\bf.\ }}}{\end{prealphthm}}
\newtheorem{prepro}{{\bf Proposition}}
\newtheorem{preprb}{{\bf Problem}}
\def\conct[#1,#2]{\mbox {${#1} \leftrightarrow {#2}$}}
\def\dconct[#1,#2]{\mbox {${#1} \rightarrow {#2}$}}
\def\deg[#1,#2]{\mbox {$d_{_{#1}}(#2)$}}
\def\mindeg[#1]{\mbox {$\delta_{_{#1}}$}}
\def\maxdeg[#1]{\mbox {$\Delta_{_{#1}}$}}
\def\outdeg[#1,#2]{\mbox {$d_{_{#1}}^{^+}(#2)$}}
\def\minoutdeg[#1]{\mbox {$\delta_{_{#1}}^{^+}$}}
\def\maxoutdeg[#1]{\mbox {$\Delta_{_{#1}}^{^+}$}}
\def\indeg[#1,#2]{\mbox {$d_{_{#1}}^{^-}(#2)$}}
\def\minindeg[#1]{\mbox {$\delta_{_{#1}}^{^-}$}}
\def\maxindeg[#1]{\mbox {$\Delta_{_{#1}}^{^-}$}}
\def\dre[#1,#2,#3]{\mbox {${\cal E}_{_{#3}}(#1,#2)$}}
\def\pdre[#1,#2,#3]{\mbox {${\cal P}_{_{#3}}(#1,#2)$}}
\def\var[#1,#2]{\mbox {${\rm Var}_{_{#1}}(#2)$}}
\def\ls[#1]{\mbox {$\xi^{^{#1}}$}}
\def\hom[#1,#2]{\mbox {${\rm Hom}({#1},{#2})$}}
\def\onvhom[#1,#2]{\mbox {${\rm Hom^{v}}(#1,#2)$}}
\def\onehom[#1,#2]{\mbox {${\rm Hom^{e}}(#1,#2)$}}
\def\core[#1]{\mbox {$#1^{^{\bullet}}$}}
\def\cay[#1,#2]{\mbox {${\rm Cay}({#1},{#2})$}}
\def\cays[#1,#2]{\mbox {${\rm Cay_{s}}({#1},{#2})$}}
\def\dirc[#1]{\mbox {$\stackrel{\rightarrow}{C}_{_{#1}}$}}
\def\cycl[#1]{\mbox {${\bf Z}_{_{#1}}$}}
\begin{document}
\begin{center}
{\Large \bf A Note On Cover-Free Families}\\
\vspace*{0.5cm}
{\bf Mehdi Azadimotlagh}\\
{\it Department of Mathematics}\\
{\it Kharazmi University, 50 Taleghani Avenue, 15618, Tehran, Iran}\\
{\tt std$\_$m.azadim@khu.ac.ir}\\
\end{center}
\begin{abstract}
Let $ N((r, w;d),t) $ denote the minimum number of points in a $ (r, w;d)- $cover-free family having $ t  $ blocks. 
Hajiabolhassan and Moazami (2012)~\cite{Hajiabolhassan20123626} showed that the Hadamard conjecture is equivalent to confirm $N((1,1; d), 4d-1)=4d-1$. 
Hence, it is a challenging and interesting problem to determine the exact value of $N((r,w;d),t)$. 
In this paper, we determine the exact value of $N((r,w; d), t)$ for every $r$, $w$, where  
$r+w \leq t$ and some $d$.
\begin{itemize}
\item[]{{\footnotesize {\bf Key words:}\ Cover-free families, Biclique covering number}}
\item[]{ {\footnotesize {\bf Subjclass:} 05B40. }}
\end{itemize}
\end{abstract}
\section{Introduction}
A family of sets is called an $ (r , w) $-cover-free family (or $ (r , w) $-CFF) if no intersection of $ r $ sets of the family are covered by a union of any other $ w $ sets of the family. Cover-free families were first described  by Kautz and Singletonin (1964) to investigate superimposed binary codes ~\cite{1053689}. Erdos et al. ~\cite{erdos1} introduced the  $ (1,r)-$cover-free family as a generalization of Sperner family. Stinson et al. ~\cite{Stinson2000595} considered cover-free families as  group testing. Mitchell and Piper ~\cite{Mitchell1988215} considered a key distribution pattern which appears to be equivalent to the notion of cover-free family. For another application and  discussion of cover-free families, (see, for example, ~\cite{2014arXiv1410.4361A, rcff2,haji2, Hajiabolhassan20123626,rcff3, Stinson2004463, wei}). Stinson  and Wei ~\cite{Stinson2004463} have introduced a generalization of cover-free families as follows. 
\begin{defin}{ Let $d, n, t, r,$ and $ w $ be positive integers  and $ B = \{ B_1, \ldots, B_t \}$ be a collection of subsets of a set $X$, where
$|X| = n$. Each element of the collection $B$ is called a block and the elements of $X$ are called points. The pair $(X,B)$ is called an $ (r,w; d)-CFF(n, t) $ if for any two sets of indices $ L,M \subseteq [t] $ such that $ L \cap M = \emptyset , |L| = r, $ and $ |M| = w, $ we have
$$ \vert (\bigcap_{l \in L}^{} B_{l}) \setminus (\bigcup_{m \in M}^{} B_{m}) \vert \geq d .$$
Let $N((r,w;d),t)$ denote the minimum number of points of $X$ in an  $(r,w;d)-CFF$ having $t$ blocks. 
} 
\end{defin}
As was shown by Engel ~\cite{CPC:1774324}, determining the optimal value for a cover-free family is NP-hard.
Also, Hajiabolhassan and Moazami ~\cite{Hajiabolhassan20123626} showed that the existence of Hadamard matrices results from the existence of some cover-free families and vice versa. A Hadamard matrix of order $n$ is an $n \times n$ matrix $H$ with entries $+1$ and $-1$, such that $ HH^T=nI_n $. 
\begin{alphthm}\label{bc-hadamard}{\rm \cite{Hajiabolhassan20123626}}
Let $d$ be a positive integer, then $N ((1, 1; d), 4d - 1) = 4d - 1$ if and only if there exists a Hadamard matrix of order $4d$. 
\end{alphthm}
It is proved that if $ H $ is a Hadamard matrix of order $ n $, then $ n=1 $, $ n=2 $, or $ n=4d $ whenever $ d $ is a positive integer ~\cite{van2001course}. It was conjectured  by Jacques Hadamard (1893) that there exists a Hadamard matrix of every order $ 4d $ whenever $ d $ is a positive integer. Actually Hajiabolhassan and Moazami showed that the Hadamard conjecture is equivalent to confirm $N ((1, 1; d), 4d - 1) = 4d - 1$. 
Thus the problem of determining the exact value of the parameter $N((r,w; d), t)$, even for special values of $r$, $w$, $d$, and $t$ is a challenging and interesting problem. In this paper, we determine the exact value of $N((r,w; d), t)$ for every $r$, $w$, where $r+w \leq t$ and some $d$.
\section{Cover-Free Family}
In this section, we restrict our attention to determine the exact value of $N((r;w; d), t)$ for every value of $r$, $w$, and $t$, where $r+w \leq t$, and some special value of $d$. In this regard, we need to use some notation and theorem as follows. A biclique of $G$ is a complete bipartite subgraph of $G$. The $d$-biclique covering (resp. partition) number  $bc_{d}(G)$ (resp. $bp_{d}(G)$) of a graph $G$ is the minimum number of bicliques of $G$ such that every edge of $G$ belongs to at least (resp. exactly) $d$  of these bicliques. Hajiabolhassan and 
Moazami~\cite{Hajiabolhassan20123626} showed that the existence of an $(r,w;d)$-cover-free family is equivalent to  the existence of $d$-biclique cover of bi-intersection graph. The bi-intersection graph $ I_{t}(r, w) $ is  a bipartite graph whose vertices are all $w$- and $r$-subsets of a  $t$-element set, where a $w$-subset is adjacent to an $r$-subset if and only if their intersection is empty. 
\begin{alphthm}\label{haji}{\rm \cite{Hajiabolhassan20123626}}
Let $r$, $w$, $d$ and $t$, be positive integers, where $t \geq r+w$. It holds that
$N((r,w; d), t) = bc_{d}(I_t(r,w))$.
\end{alphthm}

\begin{thm}\label{bi intersect2} 
Let $r$, $w$, and $ t $  be positive integers, where $t \geq r+w $.  Also, assume that the function ${ x \choose r} { t- x \choose w } $ is maximized for  $x=t'$. If $d={ t-r-w \choose t' - r}$, then 
 $$N((r,w;d),t)=bc_{d}(I_{t}(r,w))=bp_{d}(I_{t}(r,w))={ t \choose  t'}.$$
\end{thm}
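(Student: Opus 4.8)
The plan is to combine Theorem~\ref{haji} with a matching pair of bounds on the biclique numbers of $I_t(r,w)$. Since $N((r,w;d),t)=bc_d(I_t(r,w))$ by Theorem~\ref{haji}, and since every $d$-biclique partition is in particular a $d$-biclique cover (so $bc_d(G)\le bp_d(G)$ for every graph $G$), it suffices to prove the two inequalities $bp_d(I_t(r,w))\le\binom{t}{t'}$ and $bc_d(I_t(r,w))\ge\binom{t}{t'}$. These squeeze the chain $\binom{t}{t'}\le bc_d(I_t(r,w))\le bp_d(I_t(r,w))\le\binom{t}{t'}$ into equalities, which is exactly the assertion of the theorem.

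For the upper bound I would construct an explicit $d$-biclique partition. For each $t'$-subset $S\subseteq[t]$, let $B_S$ be the bipartite subgraph whose two sides are all $r$-subsets contained in $S$ and all $w$-subsets contained in $[t]\setminus S$. Any such $r$-subset and $w$-subset are disjoint, so $B_S$ is genuinely a biclique (and $r\le t'\le t-w$ guarantees both sides are nonempty). The key computation is to count, for a fixed edge joining an $r$-set $R$ to a disjoint $w$-set $W$, the number of $B_S$ containing it: these are exactly the $t'$-sets $S$ with $R\subseteq S\subseteq[t]\setminus W$, and since $|R|=r$ while $|([t]\setminus W)\setminus R|=t-r-w$, there are $\binom{t-r-w}{t'-r}=d$ of them. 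Thus the $\binom{t}{t'}$ bicliques $\{B_S\}$ cover every edge \emph{exactly} $d$ times, i.e. form a $d$-biclique partition, so $bp_d(I_t(r,w))\le\binom{t}{t'}$.

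For the lower bound I would first bound the size of a single biclique. Given any biclique with $r$-side $A$ and $w$-side $B$, put $U=\bigcup_{R\in A}R$ and $u=|U|$. Every $R\in A$ lies in $U$ and every $W\in B$ avoids $U$, so $|A|\le\binom{u}{r}$ and $|B|\le\binom{t-u}{w}$; hence the number of edges is at most $\binom{u}{r}\binom{t-u}{w}\le\binom{t'}{r}\binom{t-t'}{w}$, the last step using precisely that $t'$ maximizes $\binom{x}{r}\binom{t-x}{w}$. Now in any $d$-biclique cover with $N$ bicliques, summing edge-multiplicities over all bicliques gives $N\binom{t'}{r}\binom{t-t'}{w}\ge d\,|E(I_t(r,w))|$. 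Since $|E(I_t(r,w))|=\binom{t}{r}\binom{t-r}{w}$ and $d=\binom{t-r-w}{t'-r}$, a routine cancellation of factorials reduces $d\,|E(I_t(r,w))|/\bigl(\binom{t'}{r}\binom{t-t'}{w}\bigr)$ to exactly $\binom{t}{t'}$, yielding $N\ge\binom{t}{t'}$ and hence $bc_d(I_t(r,w))\ge\binom{t}{t'}$.

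Each of the two constructions is short, so the substance lies in recognizing them: the canonical bicliques $B_S$ indexed by $t'$-subsets, the maximal-biclique bound that singles out the maximizer $t'$, and above all the arithmetic identity that forces the averaging lower bound and the partition upper bound to coincide. That coincidence is exactly what the hypothesis $d=\binom{t-r-w}{t'-r}$ is engineered to produce; for generic $d$ the two bounds separate and the clean equality $bc_d=bp_d=\binom{t}{t'}$ need not hold. I expect the only step demanding genuine care to be verifying the two edge counts (the local multiplicity of each $B_S$ and the global edge total), since everything else is either a definitional reduction or a direct application of the maximizing property of $t'$.
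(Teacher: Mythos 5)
Your proposal is correct and follows essentially the same route as the paper: the same $d$-biclique partition indexed by $t'$-subsets for the upper bound, and the same counting argument $bc_d \geq d|E|/B(I_t(r,w))$ combined with the maximality of $t'$ for the lower bound. You even supply two details the paper leaves implicit --- the direct computation that each edge lies in exactly $\binom{t-r-w}{t'-r}$ of the bicliques (the paper instead argues ``at least $d$'' and upgrades to ``exactly $d$'' by double counting), and the proof via $U=\bigcup_{R\in A}R$ that a biclique of $I_t(r,w)$ has at most $\binom{t'}{r}\binom{t-t'}{w}$ edges.
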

\begin{proof}{Set $t'' = { t \choose t' } $. First, we show that $ I_{t}(r,w) $ can be covered by $ t'' $ bicliques such that every edge of $ I_{t}(r,w) $ is covered by exactly $d$ bicliques. Denote the vertex set of $ I_{t}(r,w) $ by bipartition $ (X,Y ) $ in which $X={[t] \choose r} $ and $Y={[t] \choose w}$.
Suppose that $ A $ is a $ t' $-subset of $[t] $ and $ A^{c} $ is the complement of the set $A$ in  $ [t] $. 
Denote the number of  these pairs by $ t'' $. Now, for every $t'$-subset $A_{j}$ of $[t] $, where $ 1 \leq j \leq  t''$, construct the biclique $ G_{j} $ with the vertex set 
$ (X_{j} , Y_{j}) $, where $X_{j}={ A_{j}\choose r} $ and $ Y_{j}={ A_{j}^{c}\choose w} $.
Let $UV$ be an arbitrary edge of $ I_{t}(r,w)$, where $ \vert U \vert = r $ and  $ \vert V \vert =w $.
In view of the definition of  $G_j$, $UV$ is covered by every  $G_{j}$ with vertex set $(X_{j} , Y_{j})$, where $U$ is a vertex of $X_{j}$  and $V$ is a vertex  of $Y_{j}$. Thus every edge of $ I_{t}(r,w)$ is covered by at least $ d $ bicliques. One can see that 
$$\sum_{j=1}^{t''}|E(G_{j})| = {t \choose t'} {t' \choose r}{ t-t' \choose w } \quad \quad \& \quad \quad \vert E( I_{t}(r,w)) \vert = {t \choose r}{t-r \choose w}. $$
Now, it is simple to check that
$$\sum_{j=1}^{t''}|E(G_{j})| = d \vert E( I_{t}(r,w)) \vert.$$
Thus every edge of $I_{t}(r,w)$ is covered by exactly $ d $ bicliques. Note that we have actually proved that 
\begin{equation}\label{eq-bi intersect1-2}
bp_{d}(I_{t}(r,w)) \leq t''.
\end{equation}
Conversely, one can see that 
$$bp_{d}(I_{t}(r,w)) \geq bc_{d}(I_{t}(r,w)) \geq \frac{d|E(I_{t}(r,w))|}{B(I_{t}(r,w))}.$$
Also, In view of the definition of  $t'$, we have
$$\frac{d|E(I_{t}(r,w))|}{B(I_{t}(r,w))}=\frac{{ t-r-w \choose t' - r}{ t \choose r }{t-r \choose w }}{{ t' \choose r} { t- t' \choose w }}={ t\choose t' }= t''. $$
Hence, 
\begin{equation}\label{eq-bi intersect2-2}
bp_{d}(I_{t}(r,w)) \geq bc_{d}(I_{t}(r,w)) \geq t''.
\end{equation}
From ~(\ref{eq-bi intersect1-2}) and ~(\ref{eq-bi intersect2-2}), we conclude
$$bp_{d}(I_{t}(r,w)) = bc_{d}(I_{t}(r,w)) = t''.$$
By Theorem~\ref{haji},
$$N((r,w;d),t)=bc_{d}(I_{t}(r,w)),$$
 this completes the proof. 
}
\end{proof}






{\bf Acknowledgments:}  This paper is a part of Mehdi Azadi Motlagh's Ph.D. Thesis. 
The author would like to express his deepest gratitude to Professor Hossein Hajiabolhassan for his invaluable comments and discussion. 
\bibliographystyle{plain}

\end{document}